\def\R{\mathbb R}
\def\SL{{\rm SL}}
\def\SL{{\rm SL}}
\def\c{\mathcal C}
\newtheorem{theorem}{Theorem}[section]
\newtheorem{lemma}[theorem]{Lemma}
\theoremstyle{definition}
\newtheorem{definition}{Definition}
\theoremstyle{remark}
\numberwithin{equation}{section}
\theoremstyle{plain}
\newcommand{\secref}[1]{Section~\ref{#1}}
\newcommand{\thmref}[1]{Theorem~\ref{#1}}
\newcommand{\eqnref}[1]{~{\textrm(\ref{#1})}}
\begin{document}
\title[On generalized J\o{}rgensen inequality in infinite dimension]{On generalized J\o{}rgensen inequality in infinite dimension}
\author[Krishnendu  Gongopadhyay]{Krishnendu Gongopadhyay}
 \address{ Indian Institute of Science Education and Research (IISER) Mohali,
Knowledge City, Sector 81, SAS Nagar, Punjab 140306, India}
\email{krishnendu@iisermohali.ac.in, krishnendug@gmail.com}

\subjclass[2000]{Primary 20H10; Secondary 51M10, 20H25 }
\keywords{ J\o{}rgensen inequality, discreteness, Clifford matrices. }
\date{\today}
\thanks{Gongopadhyay acknowledges partial support from SERB MATRICS grant MTR/2017/000355.}

\begin{abstract}
In \cite{li1}, Li has obtained an analogue of the J\o{}rgensen inequality in the infinite-dimensional   M\"obius group. We show that this inequality is strict.  
\end{abstract}
\maketitle

\section{Introduction}
The M\"obius group $M(n)$ acts by isometries on the $n$-dimensional real hyperbolic space. The J\o{}rgensen inequality is a pioneer result in the theory of discrete subgroups of M\"obius groups. The classical J\o{}rgensen inequality gives a necessary criterion to detect the discreteness of a two-generator subgroup in $M(2)$ and $M(3)$. There have been several generalization of the J\o{}rgensen inequality in higher dimensional M\"obius groups, e.g. \cite{her}, \cite{martin}, \cite{wat}. 

The Clifford algebraic formalism to M\"obius group was initiated by Ahlfors in \cite{ahl}.  In this approach the $2 \times 2$ matrices over finite dimensional Clifford algebra acts by linear fractional transformations on the $n$-sphere.  Waterman used the Clifford algebraic formalism of M\"obius groups to obtain  some J\o{}rgensen type inequalities in \cite{wat}.    Frunz\u a  initiated a framework for  infinite dimensional M\"obius group in \cite{fr}. This framework is an extension of the Clifford algebraic viewpoint  by Ahlfors. In \cite{li1, li2, li3},  Li has used this viewpoint further to obtain discreteness criteria in infinite dimension. 

 In \cite{li1}, Li has obtained an analogue of J\o{}rgensen inequality in the infinite-dimensional   M\"obius group.  The aim of this note is to show that this inequality is strict.   In \secref{prel}, we briefly recall  basic notions of the infinite dimensional theory and note down the J\o{}rgensen type inequality of Li.   In \secref{thm} we prove that Li's inequality is strict, see \thmref{thm1}.

\section{Preliminaries}\label{prel}

\subsection{Infinite dimensional Clifford group} The Clifford algebra $\c$ is the associative algebra over $\R$ generated by a countable family $\{i_k\}_{k=0}^{\infty}$ subject to the relations: 
$$i_hi_k=-i_k i_h, ~h \neq k, ~~i_k^2=-1,$$
and no others. Every element of $\c$ can be expressed as $a=\sum a_I I$, where $I=i_{k_1}i_{k_2} \ldots i_{k_p}, ~ 1 \leq k_1 < k_2 <\cdots< k_p \leq n$, $n$ is a fixed natural number depending upon $a$, $a_I \in \R$, and $\sum_I a_I^2 < \infty$. If $I=\emptyset$, then $a_I$ is the real part of $a$ and the remaining part is the `imaginary part' of $a$. In $\c$ the Euclidean norm is given as usual by
$$|a|=\sqrt{|Re(a)|^2 + ||Im(a)|^2}. $$
As in the finite-dimensional Clifford algebra, $\c$ has three special involutions, defined by the following. 

$*$: In $a \in \c$ as above, replace in each $I=i_{v_1}i_{v_2}\cdots i_{v_k}$ by $i_{v_k}\cdots i_{v_1}$. $a \mapsto a^{\ast}$ is an anti-automorphism.

$'$: Replace $i_k$ by $-i_k$ in $a$ to obtain $a'$.

The conjugate $\bar a$ of $a$ is now defined as: $\bar a=(a^{\ast})'=(a')^{\ast}$.

\medskip Elements of the following type:
$$a=a_0 + a_1 i_1 + \cdots+ a_n i_n + \cdots,$$
are called \emph{vectors}. The set of vectors is denoted by $\ell_2$. Let $\overline \ell_2=\ell_2\cup \{\infty\}$. For any $x \in \ell_2$, we have $x^{\ast}=x$ and $\bar x=x'$. Every non-zero  vector is invertible and $x^{-1}=\bar x/|x|^2$. The set of products of finitely many non-zero vectors is a multiplicative group, called Clifford group, and denoted by $\Gamma$. 

\medskip 
 A Clifford matrix $g=\begin{pmatrix} a & b \\c & d \end{pmatrix}$ over $\ell_2$ is defined as follows: 

\begin{enumerate}
\item $a, b, c, d \in \Gamma \cup \{0\}$;
\item $\Delta(g)=ad^{\ast}-bc^{\ast}=1$;
\item $ab^{\ast}, d^{\ast} b , cd^{\ast}, c^{\ast} a \in \ell_2.$
\end{enumerate}

The set of all such matrices form a group, denoted by $\SL(\Gamma)$. For $g$ as above, 
$g^{-1}=\begin{pmatrix} d^{\ast} & - b^{\ast} \\ -c^{\ast} & a^{\ast} \end{pmatrix}$. Note that $gg^{-1}=g^{-1}g=I$. 

The group ${\rm PSL}(\Gamma)=\SL(\Gamma)/\{\pm I\}$ acts on $\overline \ell_2$ by the following transformation:
$$g: x \mapsto (ax+b)(cx+d)^{-1}. $$

\subsection{Classification of elements in $\SL(\Gamma)$}
Let $f$ be in $\SL(\Gamma)$. Then

\begin{itemize} 
\item $f$ is \emph{loxodromic} if it is conjugate in $\SL(\Gamma)$ to $\begin{pmatrix} r \lambda & 0 \\ 0 & r^{-1} \lambda'\end{pmatrix}$, where $r \in \R-\{0\}$, $|r| \neq 1$, $\lambda \in \Gamma$. If $\lambda=\pm 1$, then $f$ is called \emph{hyperbolic}. 

\item $f$ is parabolic if it is conjugate in $\SL(\Gamma)$ to $\begin{pmatrix} a & b \\ 0 & a' \end{pmatrix}$, where $a, b \in \Gamma$, $|a|=1$, $b \neq 0$, and $ab=ba'$. 

\item Otherwise $f$ is elliptic. 
\end{itemize}

\begin{definition}
For $g=\begin{pmatrix} a & b \\ c & d  \end{pmatrix}$, the \emph{trace} of $g$ is defined by
$$tr(g)=a+ d^{\ast}.$$
A non-trivial element $g \in \SL(\Gamma)$ as above is called \emph{vectorial} if $b^{\ast}=b$, $c^{\ast}=c$, and $tr(g) \in \R$. 
\end{definition}
The real part of trace is a conjugacy invariant in $\SL(\Gamma)$. 
\begin{lemma} \cite{liw, li1} 
If an element $g$ in $\SL(\Gamma)$ is hyperbolic then $tr(g) \in \R$, $tr^2(g)>4$. 
\end{lemma}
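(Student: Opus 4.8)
The plan is to reduce everything to a direct trace computation in the normal form, exploiting two structural facts: real scalars lie in the centre of $\c$ and are fixed by all three involutions, and the determinant condition $\Delta(T)=1$ holds for the conjugating matrix. First, by the definition of hyperbolicity I would write $g=ThT^{-1}$ with $T=\begin{pmatrix} p & q \\ s & t\end{pmatrix}\in\SL(\Gamma)$ and $h=\begin{pmatrix} r\lambda & 0 \\ 0 & r^{-1}\lambda'\end{pmatrix}$, where $r\in\R$, $|r|\neq 1$, and $\lambda=\pm1$. Since $\lambda$ is real, $\lambda'=\lambda$ and $\lambda^{\ast}=\lambda$, so the diagonal entries $r\lambda$ and $r^{-1}\lambda$ of $h$ are real and central.

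The key step is to compute the $(1,1)$ and $(2,2)$ entries $a,d$ of $g=ThT^{-1}$ using the explicit inverse $T^{-1}=\begin{pmatrix} t^{\ast} & -q^{\ast}\\ -s^{\ast} & p^{\ast}\end{pmatrix}$, and then to form $tr(g)=a+d^{\ast}$. Because $r$ and $\lambda$ are central and real, and because $\ast$ is an anti-automorphism (so that $(sq^{\ast})^{\ast}=qs^{\ast}$ and $(tp^{\ast})^{\ast}=pt^{\ast}$ when passing from $d$ to $d^{\ast}$), the two diagonal contributions should collect into a single common factor:
$$tr(g)=a+d^{\ast}=\lambda(r+r^{-1})\,(pt^{\ast}-qs^{\ast}).$$
Here $pt^{\ast}-qs^{\ast}=\Delta(T)=1$, so the expression collapses to $tr(g)=\lambda(r+r^{-1})\in\R$, proving the first assertion.

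For the inequality I would then note $tr^2(g)=\lambda^2(r+r^{-1})^2=(r+r^{-1})^2=r^2+2+r^{-2}$, and invoke the elementary fact that $r^2+r^{-2}>2$ for real $r$ with $|r|\neq1$ (equivalently $(r-r^{-1})^2>0$), giving $tr^2(g)>4$. The only delicate point in the argument is the bookkeeping of the non-commuting Clifford entries under the involution in the trace computation, and I expect this collection of terms to be the main obstacle: it is precisely where the centrality of the real scalars $r,\lambda$ and the determinant normalisation $\Delta(T)=1$ must be used together to force the combination of the two diagonal entries into the single factor $\Delta(T)$. Once that cancellation is verified, both the reality of the trace and the strict inequality are immediate.
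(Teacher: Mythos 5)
Your proposal is correct, and it is worth noting at the outset that the paper itself gives no proof of this lemma --- it is quoted from \cite{liw, li1} --- so your computation serves as a self-contained substitute, and it is essentially the standard normal-form argument one expects in those sources. The delicate point you flagged does work out exactly as you predicted: writing $\mu=r\lambda$ and $\nu=r^{-1}\lambda'=r^{-1}\lambda$ (real since $\lambda=\pm1$, hence central in $\mathcal C$ and fixed by $\ast$), the diagonal entries of $g=ThT^{-1}$ are
\[
a=\mu\, pt^{\ast}-\nu\, qs^{\ast},\qquad d=-\mu\, sq^{\ast}+\nu\, tp^{\ast},
\]
and since $\ast$ is an anti-automorphism with $(q^{\ast})^{\ast}=q$, one has $(sq^{\ast})^{\ast}=qs^{\ast}$ and $(tp^{\ast})^{\ast}=pt^{\ast}$, whence
\[
tr(g)=a+d^{\ast}=(\mu+\nu)\bigl(pt^{\ast}-qs^{\ast}\bigr)=\lambda(r+r^{-1})\,\Delta(T)=\lambda(r+r^{-1})\in\R,
\]
and $tr^2(g)=(r+r^{-1})^2=4+(r-r^{-1})^2>4$ since $r\in\R-\{0\}$ with $|r|\neq1$. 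One point in your favour that deserves emphasis: the paper records only that the \emph{real part} of the trace is a conjugacy invariant in $\SL(\Gamma)$, so you could not have simply transported the trace of the diagonal normal form to $g$; your direct computation is precisely what closes that gap, because the real scalar $\mu+\nu$ factors out of both diagonal contributions and leaves the determinant normalisation $\Delta(T)=1$, making the full trace (not merely its real part) agree with that of the normal form in the hyperbolic case.
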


\begin{definition}
A subgroup $G$ of $\SL(\Gamma)$ is called elementary if it has a finite orbit in $\ell_2$. Otherwise, $G$ is called non-elementary. 

 A subgroup $G$ of $\SL(\Gamma)$ is discrete if for a sequence $f_i \to g$ in $G$ implies that $f_i=g$ for all sufficiently large $i$. Otherwise $G$ is not discrete. 
\end{definition} 
\subsection{Li-J\o{}rgensen inequality}The following is the generalized Jo{}rgensen inequality in infinite dimensional that was given by Li in \cite{li1}. 
\begin{theorem} \cite[Theorem 3.1]{li1} \label{lit} 
Let $f, g \in \SL(\Gamma)$ be such that $f$ is hyperbolic, and $[f, g]=f g f^{-1} g^{-1}$ is vectorial. Suppose that the two-generator group $\langle f, g \rangle$ is discrete and non-elementary. Then
\begin{equation}\label{li0} |tr^2(f)-4|+|tr([f, g])-2| \geq 1. \end{equation} 
\end{theorem}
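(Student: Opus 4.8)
The plan is to carry out the classical J\o{}rgensen iteration in the Clifford-matrix setting, using the vectorial hypothesis to keep the controlling quantities real and scalar. First I would normalize $f$. Since $f$ is hyperbolic, by the classification it is conjugate in $\SL(\Gamma)$ to $\begin{pmatrix} r & 0 \\ 0 & r^{-1}\end{pmatrix}$ with $r\in\R$, $|r|>1$, and by the Lemma $tr(f)=r+r^{-1}\in\R$ with $tr^2(f)>4$. As discreteness, non-elementarity, and the (real) traces appearing in \eqnref{li0} are conjugacy invariants, I may assume $f$ is already of this diagonal form. Writing $g=\begin{pmatrix} a & b \\ c & d\end{pmatrix}$ with $ad^{\ast}-bc^{\ast}=1$, a direct computation of $[f,g]=fgf^{-1}g^{-1}$ (using that $r$ is real and central and that $\Delta(g)=1$) gives
\[ tr([f,g])-2=-\bigl(tr^2(f)-4\bigr)\,bc^{\ast}. \]
Setting $\mu:=tr^2(f)-4=(r-r^{-1})^2>0$, the vectorial hypothesis forces $tr([f,g])\in\R$, hence $t:=bc^{\ast}$ is a real scalar; taking moduli, the left-hand side of \eqnref{li0} equals $\mu+\mu|t|$, so \eqnref{li0} is equivalent to the scalar inequality $\mu(1+|t|)\ge 1$.

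The second step is the iteration. Assume for contradiction that $\mu(1+|t|)<1$. Following J\o{}rgensen, set $g_0=g$ and $g_{n+1}=g_nfg_n^{-1}$, so that each $g_n\in\langle f,g\rangle$ is a conjugate of $f$ and hence hyperbolic with the same trace. Writing $g_n=\begin{pmatrix} a_n & b_n \\ c_n & d_n\end{pmatrix}$, the relation $g_{n+1}=g_nfg_n^{-1}$ produces explicit formulas for $a_{n+1},b_{n+1},c_{n+1},d_{n+1}$ in terms of $r$ and the entries of $g_n$, in which the off-diagonal behaviour is governed by $t_n:=b_nc_n^{\ast}$ (with $t_0=t$ and $a_nd_n^{\ast}=1+t_n$). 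The aim is to show, by induction, that under the standing assumption the entries $b_n,c_n$ decay geometrically while $a_n\to r$ and $d_n\to r^{-1}$; that is, $g_n\to f$ in $\SL(\Gamma)$.

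Granting this, the contradiction is formal. Since $\langle f,g\rangle$ is discrete and $g_n\to f\in\langle f,g\rangle$, we must have $g_n=f$ for all large $n$. Reading the relation $g_{n+1}=g_nfg_n^{-1}$ backwards then shows that each $g_n$ commutes with $f$ and hence fixes the fixed-point pair $\{0,\infty\}$ of $f$, and finally that $g=g_0$ maps $\{0,\infty\}$ to itself. Hence $\langle f,g\rangle$ preserves the two-point set $\{0,\infty\}$, so it has a finite orbit in $\ell_2$ and is elementary --- contradicting the hypothesis. This contradiction proves \eqnref{li0}.

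I expect the genuine difficulty to lie entirely in the second step: transporting the classical scalar recursion to the non-commutative, infinite-dimensional algebra $\c$. One must verify submultiplicativity of the Clifford norm on the relevant products in $\Gamma$, keep careful track of which entries are vectors in $\ell_2$ and which are general elements of $\Gamma$, and---crucially---use the vectorial hypothesis to ensure that $t_n=b_nc_n^{\ast}$ remains real throughout the iteration, so that the expansion of $t_{n+1}$ collapses to a contractive estimate of the form $|t_{n+1}|\le \mu(1+|t_n|)\,|t_n|$ (together with the analogous bounds on $b_n,c_n$) exactly as in the commutative case. Once this estimate is in place, the algebraic identity of the first step and the elementarity endgame of the third are essentially automatic.
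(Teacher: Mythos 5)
First, note that the paper does not actually prove Theorem \ref{lit}; it imports it from Li's paper \cite{li1}. Your outline nevertheless matches the machinery Li uses and that the present paper reuses in its proof of Theorem \ref{thm1}: the normalization $f=\begin{pmatrix} r & 0\\ 0 & r^{-1}\end{pmatrix}$, the identity $tr([f,g])-2=-(tr^2(f)-4)\,bc^{\ast}$, the iteration $g_{n+1}=g_nfg_n^{-1}$, and the contraction of $t_n=b_nc_n^{\ast}$ are all exactly right. One small misattribution: the recursion is in fact the \emph{exact} identity $t_{n+1}=-K(1+t_n)t_n$ with $K=(r-r^{-1})^2$ (equation \eqnref{0} in the paper), which follows from the Clifford-matrix conditions $a_nb_n^{\ast}=b_na_n^{\ast}$, $c_nd_n^{\ast}=d_nc_n^{\ast}$ and the multiplicativity of the norm on $\Gamma$; realness of $t_n$ is not needed for the estimate $|t_{n+1}|\le K|t_n|(1+|t_n|)$ (and it propagates by induction anyway), so the vectorial hypothesis is not where you place it --- its role is to make $tr([f,g])$ real, which the paper exploits in the strict version.

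The genuine gap is your inductive claim that ``$b_n,c_n$ decay geometrically while $a_n\to r$ and $d_n\to r^{-1}$; that is, $g_n\to f$.'' This is false in general under the standing assumption $K(1+|t_0|)<1$. Using $a_nb_n^{\ast}=b_na_n^{\ast}$ one gets $b_{n+1}=(r^{-1}-r)\,b_na_n^{\ast}$ and $a_{n+1}=r(1+t_n)-r^{-1}t_n\to r$, so $|b_{n+1}|/|b_n|\to r(r-r^{-1})=r^2-1$, which exceeds $1$ as soon as $r>\sqrt{2}$; but the standing assumption only forces $r-r^{-1}<1$, i.e.\ $r<(1+\sqrt{5})/2$. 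Hence for any $r\in\bigl(\sqrt{2},(1+\sqrt{5})/2\bigr)$ and $|t_0|$ small, $|b_n|\to\infty$ whenever $b_0\neq 0$ (with $|c_n|\to 0$ even faster): only the \emph{product} $t_n$ contracts, $g_n\not\to f$, and discreteness cannot be applied to $g_n$ itself --- exactly the point where your induction cannot close. The standard repair is a balancing conjugation: set $h_n=f^{-k_n}g_nf^{k_n}$, which replaces $b_n$ by $r^{2k_n}b_n$ and $c_n$ by $r^{-2k_n}c_n$ while fixing $a_n$, $d_n$, $t_n$; since $|b_n||c_n|=|t_n|\to 0$, one can choose $k_n$ so that both off-diagonal entries of $h_n$ are $O(|t_n|^{1/2})$, giving $h_n\to f$ with $h_n\in\langle f,g\rangle$, whence by discreteness $h_n=f$ and $b_n=c_n=0$ for large $n$; your elementarity backtrack then goes through verbatim. (The paper's own sketch instead derives the contradiction with non-elementarity directly from $b_{m+n}c_{m+n}^{\ast}\to 0$.) You should also dispatch separately the degenerate case $bc^{\ast}=0$, where $g$ shares a fixed point with $f$ and $\langle f,g\rangle$ has a finite orbit at once; with these two repairs your argument becomes a correct reconstruction of Li's proof.
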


\section{Li-J\o{}rgenesen Inequality is Strict} \label{thm}
\begin{theorem}\label{thm1}
Let $f, g \in \SL(\Gamma)$ be such that $f$ is hyperbolic, and $[f, g]=f g f^{-1} g^{-1}$ is vectorial. Suppose that the two-generator group $\langle f, g \rangle$ is discrete and non-elementary. Then
\begin{equation}\label{li0} |tr^2(f)-4|+|tr([f, g])-2| > 1, \end{equation} 
where the above inequality is strict. 
\end{theorem}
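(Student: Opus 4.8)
The plan is to upgrade Li's non-strict inequality \eqnref{li0} to a strict one by a contradiction argument: assume equality holds, i.e. $|tr^2(f)-4|+|tr([f,g])-2|=1$, and derive that $\langle f,g\rangle$ must fail to be discrete or must be elementary. The natural mechanism is an iteration scheme in the spirit of J\o rgensen's original proof. First I would normalize $f$ by conjugating it to the diagonal loxodromic (here hyperbolic) form $f=\begin{pmatrix} r & 0 \\ 0 & r^{-1}\end{pmatrix}$ with $r\in\R$, $|r|\neq 1$, which is permissible because both $tr^2(f)$ and $tr([f,g])$ are conjugacy invariants through the real part of the trace. Writing $g=\begin{pmatrix} a & b \\ c & d\end{pmatrix}$, I would then form the sequence $g_0=g$, $g_{m+1}=g_m f g_m^{-1}$, compute the entries of $g_m$ recursively, and track how $tr([f,g_m])-2$ evolves.

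The key computation is to show that the equality case forces the commutator parameter to contract under this iteration. Concretely, I would define a quantity $\beta_m$ measuring the deviation of $g_m$ from commuting with $f$ (built from the off-diagonal entries $b_m,c_m$ of $g_m$ and the factor $tr^2(f)-4$), and establish a recursion of the form $\beta_{m+1}\le C\,\beta_m$ with $C<1$ precisely when the quantity $|tr^2(f)-4|+|tr([f,g])-2|$ equals $1$. Then $\beta_m\to 0$, which means $g_m\to h$ for some $h\in\SL(\Gamma)$ commuting with $f$; by discreteness $g_m=g_{m+1}=\cdots$ eventually, so some $g_m$ commutes with $f$, forcing $\langle f,g\rangle$ to be elementary. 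This contradicts the hypothesis, so strict inequality must hold.

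The main obstacle will be carrying out the Clifford-algebraic estimates rigorously in the infinite-dimensional, noncommutative setting, where the entries $a,b,c,d$ lie in $\Gamma\cup\{0\}$ rather than in a field. Unlike the classical complex case, one cannot freely commute factors, so every step of the recursion for $g_{m+1}=g_mfg_m^{-1}$ must respect the order of multiplication, and the norm estimates must use the submultiplicativity $|xy|\le |x||y|$ together with the involution identities $x^{-1}=\bar x/|x|^2$ and $\Delta(g)=1$. I would lean on the vectoriality of $[f,g]$ (the conditions $b^{\ast}=b$, $c^{\ast}=c$, $tr([f,g])\in\R$) to keep the relevant quantities real and thereby make the scalar recursion meaningful. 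The delicate point is verifying that the contraction constant is \emph{strictly} less than $1$ in the boundary case rather than merely $\le 1$; I expect this to reduce to a careful examination of the inequality $|tr^2(f)-4|+|tr([f,g])-2|\ge 1$ at equality, showing that the iteration strictly decreases $\beta_m$ unless $\beta_0=0$ already, the latter again yielding an elementary group. Once the contraction is secured, the discreteness-forces-stabilization step and the resulting elementariness follow as in the finite-dimensional arguments of \cite{wat}.
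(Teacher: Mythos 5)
There is a genuine gap, and it sits exactly where you flagged the ``delicate point.'' Your plan hinges on extracting a strict contraction constant $C<1$ for $\beta_m$ in the equality case, but no such contraction exists there --- that is precisely what distinguishes the boundary case from the interior case handled by Li's original argument. With $f$ normalized to ${\rm diag}(r,r^{-1})$, set $K=(r-r^{-1})^2$ and $w_m=b_mc_m^{\ast}$; the iteration $g_{m+1}=g_mfg_m^{-1}$ satisfies the exact recursion $w_{m+1}=-K(1+w_m)w_m$, hence $|w_{m+1}|\le K(1+|w_m|)\,|w_m|$. Under the equality hypothesis $K(1+|w_0|)=1$, the first step yields only $|w_1|\le |w_0|$. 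If the estimate is ever strict, then $\alpha_m=K(1+|w_m|)<1$ for some $m$, and from that point Li's own contraction argument applies, giving $w_{m+n}\to 0$ and contradicting non-elementarity --- this branch is fine and is how the paper disposes of it. But the surviving branch is the regime where $K(1+|w_m|)=1$ for \emph{all} $m$, i.e. $|w_m|$ is constant and positive; this is a self-consistent fixed regime of the iteration in which your $\beta_m$ does not decrease at all. Your expectation that ``the iteration strictly decreases $\beta_m$ unless $\beta_0=0$'' is exactly what fails: no refinement of the norm estimates can produce $C<1$ there, because the moduli are literally constant, and consequently the stabilization step ($g_m\to h$ commuting with $f$, then discreteness) never gets off the ground.

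The paper closes this case not dynamically but algebraically, by a sign analysis your proposal does not contain. Having shown that equality must propagate, $K(1+|w_m|)=1$ for all $m$, it combines this with the recursion to force $K|1+w_m|=1$, which is the equality case of the triangle inequality $|tr([f,g])-2+4-tr^2(f)|=|tr([f,g])-2|+|4-tr^2(f)|$; since $4-tr^2(f)=-K<0$ and $w_m$ is real (here vectoriality of $[f,g]$ enters, as you anticipated), equality forces $w_m>0$ for all $m$. Then $K=1/(1+w_m)$ makes $w_m$ constant in $m$, while dividing the recursion $w_{m+1}=-K(1+w_m)w_m$ by $w_m>0$ gives $K(1+w_m)=-1$, impossible since $K>0$. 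So discreteness and non-elementarity are used only to kill the contracting branch via Li's argument; the boundary regime itself dies by an incompatibility of signs, not by convergence of the $g_m$. To repair your proof you would need to replace the sought-for strict contraction with this two-step structure: (i) equality propagates along the orbit $g_m$, and (ii) the equality case of the triangle inequality pins down the phase of $w_m$ in a way the minus sign in the recursion cannot sustain.
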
 
\begin{proof}
It follows from \thmref{lit} that 
$$ |tr^2(f)-4|+|tr([f, g])-2| \geq 1. $$
If possible suppose that  
\begin{equation}\label{li} |tr^2(f)-4|+|tr([f, g])-2| = 1. \end{equation} 
Up to conjugacy, we assume 
$f=\begin{pmatrix} r & 0 \\ 0 & r^{-1} \end{pmatrix}$, $r>1$. Let $g=\begin{pmatrix} a & b \\ c & d \end{pmatrix}$. Let $J(f, g)$ denote the right hand side of \eqnref{li}.

By computation it is easy to see that 
$$tr([f, g])-2=-(r-r^{-1})^2bc^{\ast};$$
$$tr^2(f)-4=(r-r^{-1})^2.$$
So,  $J(f, g)=(r-r^{-1})^2(1 + |bc^{\ast}|)=1$. 
Since $[f, g]$ is vectorial, it follows from above that $bc^{\ast}$ is a real number.

Let $g_0=g$, $g_{m+1}=g_m f g_m^{-1}$, $g_m=\begin{pmatrix} a_m & b_m \\ c_m & d_m \end{pmatrix}$.
Let $K=(r-r^{-1})^2$,  $w_m=b_m c_m^{\ast}$. 

\medskip Then by the equality in \eqnref{li} we have $K(1+|w_0|)=1$. This implies $K<1$. 
 
Now note that 
\begin{equation}\label{0} b_{m+1} c_{m+1}^{\ast}=-K(1+b_m c_m^{\ast}). b_m c_m^{\ast}.\end{equation}
 By induction, $w_m=b_m c_m^{\ast}$ is a sequence of real numbers. Also 
$$|w_{m+1} | \leq K |w_m| (1+|w_m|).$$
If possible suppose $\alpha_m=K(1+|w_{m}|)<1$ for some $m$. Then using arguments similar to the proof of \cite[Theorem 3.1]{li1}, it can be shown that $|b_{m+n}c_{m+n}^{\ast} | \leq \alpha_{m}^n |b_mc_m^{\ast}|$ and $b_{m+n} c_{m+n}^{\ast} \to 0$ as $n \to \infty$, that would give a contradiction to the assumption that  $\langle f, g \rangle$ is non-elementary. So, we must have $K(1+|w_m|) \geq 1$ for all $m$. 

Thus 
$$1 \leq K(1+|w_m|) \leq K (1+|w_{m-1}|).$$ It is given that $K(1+|w_0|)=1$. By induction, it follows that for all $m$,  
\begin{equation} \label{1} J(f, g_m)=K(1+|w_m|)=1. \end{equation}
Note from \eqnref{0} and \eqnref{1}, 
$$1-K=K|w_{m+1}|\leq K. K|w_m|. |1+w_m| \leq (1-K)K|1+w_m|\leq (1-K) K(1+|w_m|) \leq (1-K),$$
that implies, 
\begin{equation}\label{2} K|1+w_m|=1. \end{equation}

\medskip Observe that
 $$|tr([f, g])-2 + 4 - tr^2(f)|=K|1+bc^{\ast}|=1=|tr([f, g])-2|+|4-tr^2(f)|. $$
Since $4-tr^2(f)<0$, this implies $w_0>0$. Hence by induction from \eqnref{1} and \eqnref{2},  $w_m>0$ for all $m$. Thus, we have from \eqnref{1}, $K={1}/(1+w_m)$. In particular, $w_m=w_{m+1}$. Now,  from \eqnref{0}, we have 
$K(1+w_m)=-1$, i.e. $K=-1/(1+w_m)$. This is a contradiction. Hence the inequality \label{li0} must be strict. 
\end{proof}

\end{document}